\newcommand{\PP}{\mathbb{P}}
\newcommand{\ZZ}{\mathbb Z}
\newcommand{\codim}{\mathrm{codim\,}}
\newcommand{\ra}{\rightarrow}
\newtheorem{theorem}{Theorem}[section]
\newtheorem{lemma}[theorem]{Lemma}
\newtheorem{corollary}[theorem]{Corollary}
\theoremstyle{definition}
\newtheorem*{remark}{Remark}
\title{Exceptional loci in Lefschetz theory}
\author{Sam Raskin}
\address{Department of Mathematics, The University of Texas at Austin,  RLM 8.100, 2515 Speedway Stop
C1200, Austin, TX 78712}
\email{sraskin@math.utexas.edu}
\author{Geoffrey Smith}
\address{Department of Mathematics, Stat. and CS \\University of Illinois at Chicago, Chicago, IL 60607}
\email{geoff@uic.edu}
\date{}
\begin{document}
\begin{abstract}
Let $\phi:X\ra \PP^n$ be a morphism of varieties. Given a hyperplane $H$ in $\PP^n$, there is a Gysin map from the compactly supported cohomology of $\phi^{-1}(H)$ to that of $X$. We give conditions on the degree of the cohomology under which this map is an isomorphism for all but a low-dimensional set of hyperplanes, generalizing results due to Skorobogatov, Benoist, and Poonen-Slavov. Our argument is based on Beilinson's theory of singular supports for \'etale sheaves. 
\end{abstract}
\maketitle
\section{Introduction and statement of results}
In this note, we prove a generalization of the following theorem due to Benoist and written here in a form due to \cite{PS20}.
\begin{theorem}[\cite{Ben11}, Th\'eor\`eme 1.4]\label{BPS}
Let $X\subset \PP^N$ be a geometrically irreducible quasiprojective variety over a field $k$. Define $M_{\mathrm{bad}}\subseteq \check\PP^N$ as the locus of hyperplanes $H$ such that $X_H:=X\cap H$ is not geometrically irreducible. Then $\codim M_\mathrm{bad}\geq \dim X -1$
\end{theorem}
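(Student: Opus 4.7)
The plan is to reformulate geometric irreducibility of $X_H$ as the isomorphism of a top-degree Gysin map, and then apply Beilinson's theory of singular supports to bound the codimension of the exceptional locus.

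First I would carry out a cohomological reformulation. Since $X$ is geometrically irreducible of dimension $d$, we have $H^{2d}_c(X_{\bar k}, \QQ_\ell) \cong \QQ_\ell(-d)$, which is one-dimensional. For a hyperplane $H$ not containing $X$, the intersection $X_H$ has pure dimension $d-1$, and the number of top-dimensional geometric irreducible components of $X_H$ equals $\dim H^{2(d-1)}_c((X_H)_{\bar k}, \QQ_\ell)$. The Gysin map
\[
H^{2(d-1)}_c(X_H, \QQ_\ell) \to H^{2d}_c(X, \QQ_\ell)(1)
\]
is surjective whenever $X \not\subseteq H$, so $X_H$ is geometrically irreducible precisely when this Gysin map is an isomorphism.

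Next I would apply the singular support machinery. After reducing to the case where $X$ is closed in $\PP^N$ (by replacing $X$ with its closure and checking the bad loci agree up to a subset of codimension $\geq d-1$), set $\mathcal F = f_!\QQ_\ell$ on $\PP^N$, where $f\colon X\hookrightarrow \PP^N$. By Beilinson's theorem, $SS(\mathcal F) = \bigcup_j T^*_{Z_j}\PP^N$ is a finite union of conormal varieties of irreducible subvarieties $Z_j \subseteq X$. For any hyperplane $H$ non-characteristic for $\mathcal F$ (i.e.\ $T^*_H\PP^N \cap SS(\mathcal F)$ is contained in the zero section), one has $i_H^!\mathcal F \cong i_H^*\mathcal F[-2](-1)$, and a standard Lefschetz-type argument then shows the Gysin map is an isomorphism. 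Hence $M_{\mathrm{bad}} \subseteq \bigcup_j \check Z_j$, where $\check Z_j \subseteq \check\PP^N$ denotes the projective dual.

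The remaining and crucial step is the codimension bound $\codim_{\check\PP^N} \bigcup_j \check Z_j \geq d-1$. The naive estimate $\codim \check Z_j \geq 1$ per component only recovers classical Bertini. To sharpen, I would refine which $SS$-components can obstruct \emph{top-degree} Gysin: components $Z_j$ with $\dim Z_j < d$ contribute only to lower-degree cohomology, so the top stratum $Z_j = X$ (the unique component of dimension $d$, since $X$ is irreducible) is the only candidate culprit. Furthermore, among $H \in \check X$, only hyperplanes with positive-dimensional tangency locus actually give nonvanishing vanishing cycles in the top degree; the ``super-tangent'' sublocus is controlled by the family of $(d-1)$-dimensional subvarieties of $X$ contained in some hyperplane, and a direct dimension count (each such subvariety $Y$ contributes a linear $\check Y \subseteq \check\PP^N$ of codimension at least $d$, and the parameter family of such $Y$ has dimension at most $1$ after accounting for the Lagrangian constraint on $SS$) yields the desired bound.

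The main obstacle is precisely this final codimension estimate: passing from the non-characteristic condition (codim $\geq 1$) to the sharp bound $d-1$. Generic singular-support formalism does not distinguish between degrees, whereas our Gysin isomorphism is only needed in one specific degree, so a genuinely refined microlocal analysis — identifying which parts of $SS(\mathcal F)$ obstruct the top-degree map rather than Gysin in all degrees — is what carries the weight of the argument, and is where I expect the main technical input of this paper to lie.
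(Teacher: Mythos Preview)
Your overall strategy --- reformulate irreducibility via the top-degree Gysin map and invoke Beilinson's singular support --- matches the paper. The gap is in your final step. You propose to sharpen the naive bound by analyzing \emph{which} components of $SS(\mathcal{F})$ obstruct the Gysin map in a specific degree, but this is neither what the paper does nor is it made precise in your sketch. In particular, the claim that strata $Z_j$ with $\dim Z_j < d$ ``contribute only to lower-degree cohomology'' is not justified (all irreducible components of $SS(\mathcal{F})$ have the same dimension $n$, and there is no a priori filtration by cohomological degree), and the subsequent dimension count involving $(d-1)$-dimensional subvarieties contained in hyperplanes is too vague to constitute an argument. Also note that the decomposition $SS(\mathcal{F}) = \bigcup_j T^*_{Z_j}\PP^N$ into conormal varieties is only available in characteristic $0$; in general Beilinson's theorem gives only $\dim SS(\mathcal{F}) = n$, which is exactly what the paper uses.

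The paper's route avoids all of this. Rather than asking when the sheaf-level Gysin map $i^*\mathcal{F}(-1)[-2]\to i^!\mathcal{F}$ is an isomorphism (the non-characteristic condition, which indeed only gives codimension $\geq 1$), one bounds the \emph{dimension of the support} of its cone. Set $Z := \PP SS(\mathcal{F}) \subset \Phi \cong \PP T^*\PP^N$; by Beilinson's theorem $\dim Z = N-1$. For each $H$, the cone of the Gysin map is supported on the fiber $Z\cap \pi_2^{-1}(H)$, so the locus $Z_c\subset\check\PP^N$ where this fiber has dimension $\geq c-1$ is closed of dimension $\leq N-c$. For $H\notin Z_c$ the cone is supported in dimension $\leq c-2$, and since $\mathcal{F}=\phi_!\Lambda_X$ lies in perverse degrees $\leq r$ (here $r=d$ for an embedding), a spectral sequence shows $\mathbb{H}^q$ of the cone vanishes for $q>r+c-1$. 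Taking $c=d-1$ and $q=2d$ gives $M_{\mathrm{bad}}\subset Z_{d-1}$, hence $\codim M_{\mathrm{bad}}\geq d-1$. The missing idea in your proposal is precisely this: replace ``non-characteristic'' by ``cone supported in small dimension,'' and extract the codimension bound from the single fact $\dim \PP SS(\mathcal{F}) = N-1$ via the fiber-dimension stratification of the map $Z\to\check\PP^N$.
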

This result is geometric, but has a cohomological reformulation: the top degree compactly supported cohomology groups of $X $ and $X_H$ are isomorphic. In this note, we prove a similar result for all cohomology of sufficiently high degree on $X$. 

\begin{theorem}\label{main}
Let $X$ be a separated scheme of finite type over a separably closed field $k$, let $\phi:X\ra \PP^n$ be a morphism, and let $\Lambda=\ZZ/\ell\ZZ$ for $\ell$ a prime power not divisible by the characteristic of $k$. Set $r=\dim X\times_{\PP^n} X$. 

Then for each $c\geq 1$, there is a closed subscheme $Z_c\subset \check\PP^n$ of dimension at most $n-c$ such that for $H\in \check\PP^n\setminus Z_c$ and 
$q>c+r$ (resp. $q = c+r$), the Gysin map
\[
H^{q-2}_c(\phi^{-1}(H), \Lambda(-1))\ra H^q_c(X, \Lambda)
\]
is an isomorphism (resp. surjective). 
\end{theorem}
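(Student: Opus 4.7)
The plan is to reformulate the statement as a cohomological vanishing and then control it via Beilinson's singular support.

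Set $F = R\phi_! \Lambda_X \in D^b_c(\PP^n, \Lambda)$. By proper base change $H^q_c(\phi^{-1}(W), \Lambda) = H^q_c(W, F|_W)$ for locally closed $W \subseteq \PP^n$. The Gysin map on compactly supported cohomology is induced by a sheaf morphism $\gamma : i_{H,*}F|_H(-1)[-2] \to F$ on $\PP^n$, pulled back from the Gysin class of the divisor $H \subset \PP^n$. Since $\PP^n$ is proper, the assertion is equivalent to $H^q(\PP^n, \mathrm{cofib}\,\gamma) = 0$ for all $q \geq c+r$.

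Since $\Lambda_X[\dim X] \in {}^pD^{\leq 0}(X)$ and $R\phi_!$ raises perverse amplitude by at most the maximum fibre dimension $r - \dim X$, we have $F \in {}^pD^{\leq r}(\PP^n)$, and Artin vanishing on the affine open $\PP^n \setminus H$ gives $H^q(\PP^n \setminus H, F) = 0$ for all $q > r$. To analyse $\mathrm{cofib}\,\gamma$, factor $\gamma$ as $i_{H,*}F|_H(-1)[-2] \xrightarrow{i_{H,*}\beta} i_{H,*}i_H^!F \to F$, where $\beta : i_H^*F(-1)[-2] \to i_H^!F$ is the canonical comparison and the second arrow is the counit. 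Combining the localization triangle $i_{H,*}i_H^!F \to F \to Rj_*F|_{\PP^n \setminus H}$ with the identification of $\mathrm{cofib}\,\beta$ with (a shift of) the vanishing cycles $\phi_f F$, where $f$ is any local defining equation of $H$, the octahedral axiom produces a distinguished triangle
\[
i_{H,*}\phi_f F \to \mathrm{cofib}\,\gamma \to Rj_*F|_{\PP^n \setminus H}.
\]

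By Beilinson's theorem, $SS(F) \subset T^*\PP^n$ is a closed conic Lagrangian of pure dimension $n$, so its projectivization $\mathbb P(SS(F))$ has dimension $n - 1$. Composing the canonical identification $\mathbb P(T^*\PP^n) \cong \{(x, H) \in \PP^n \times \check\PP^n : x \in H\}$ with the projection to $\check\PP^n$ yields a tangency map $\mu : \mathbb P(SS(F)) \to \check\PP^n$. Define $Z_c := \overline{\{H \in \check\PP^n : \dim \mu^{-1}(H) \geq c - 1\}}$; upper-semicontinuity of fibre dimension gives $\dim Z_c \leq (n - 1) - (c - 1) = n - c$. Beilinson's microlocal characterization --- that $(x, \xi) \in SS(F)$ iff $\phi_g F$ is nonzero in every neighbourhood of $x$ for germs $g$ with $dg(x) = \xi$ --- implies $\mathrm{supp}(\phi_f F) \subseteq \mu^{-1}(H)$, so for $H \notin Z_c$ this support has dimension at most $c - 2$.

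Since $\phi_f[-1]$ is perverse $t$-exact and $F \in {}^pD^{\leq r}$, we have $\phi_f F \in {}^pD^{\leq r - 1}(H)$. Combined with the support bound $\leq c - 2$ and the fact that a perverse sheaf on a projective variety whose support has dimension $d$ has ordinary cohomology vanishing outside the range $[-d, d]$, the perverse spectral sequence yields $H^q(H, \phi_f F) = 0$ for all $q \geq c + r - 2$. Together with the Artin vanishing of $H^*(\PP^n \setminus H, F)$, the long exact sequence from the triangle above gives $H^q(\PP^n, \mathrm{cofib}\,\gamma) = 0$ for all $q \geq c + r$, completing the proof. The main technical obstacle is the support identification $\mathrm{supp}(\phi_f F) \subseteq \mu^{-1}(H)$, for which Beilinson's full microlocal description of $SS(F)$ is essential.
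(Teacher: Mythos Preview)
Your approach is essentially the same as the paper's: set $F=R\phi_!\Lambda_X$, factor the Gysin map through the counit $i_*i^!F\to F$ and the local Gysin $i^*F(-1)[-2]\to i^!F$, control the cone of the latter via $SS(F)$ and Beilinson's dimension bound, define $Z_c$ by the fibre-dimension stratification of $\PP(SS(F))\to\check\PP^n$, and combine with Artin vanishing on $\PP^n\setminus H$. The paper packages the singular-support step as a separate statement (its Theorem~2.3/Corollary~2.4) but the content is identical.

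There is, however, a genuine gap in your justification of $F\in{}^pD^{\le r}$. You assert that the maximum fibre dimension of $\phi$ equals $r-\dim X$; this is false. If $\phi$ is generically finite with a single fibre of dimension $d$ and $2d<\dim X$, then $d_{\max}=d$ but $r=\dim X$, so $r-\dim X=0<d_{\max}$. The general bound ``$R\phi_!$ raises perverse amplitude by at most $d_{\max}$'' then only yields $F\in{}^pD^{\le\dim X+d_{\max}}$, which can be strictly weaker than ${}^pD^{\le r}$ and would not give the theorem as stated. The correct argument, which the paper gives, is specific to $\Lambda_X$: by proper base change $R^p\phi_!\Lambda_X$ is supported on the locus where $\dim\phi^{-1}(x)\ge p/2$; over that locus the fibres of $X\times_{\PP^n}X\to\PP^n$ have dimension $\ge p$, so the locus has dimension $\le r-p$, giving $F\in{}^pD^{\le r}$ directly.

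Two smaller imprecisions: $\mathrm{cofib}(\beta)$ is not literally a shift of $\phi_fF$ but an extension of $\phi_fF(-1)[-2]$ by $\phi_fF[-1]$ (combine the two nearby/vanishing triangles); this does not change the support and only shifts your perverse bound by a harmless constant. Also, $\mathrm{supp}(\phi_fF)$ lies in $H$, so what you mean is that it is contained in the image of $\mu^{-1}(H)$ under the projection to $\PP^n$; the dimension bound is unaffected.
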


In the setting of Theorem \ref{BPS}, taking $q = 2\dim X$, 
we see that $M_\mathrm{bad} \subset Z_{\dim X-1}$, 
which has dimension $\leq n -\dim X+1$,  
recovering the assertion of \emph{loc. cit}.

We remind the construction of Gysin maps in Section 2.

\begin{remark}
If $X$, $\phi$, and $H$ are defined over an arbitrary field $k'$ with separable
closure $k$, then our construction shows that $Z_c$ is naturally defined over $k'$.
Moreover, a Gysin map $H^{q-2}_c(\phi^{-1}(H)\times_{\rm{Spec}(k')} \mathrm{Spec}(k), \Lambda(-1))\ra H^q_c(X\times_{\rm{Spec}(k')} \rm{Spec}(k), \Lambda)$ is compatible with the action of $\rm{Gal}(k/k')$, so if $H \not \in Z_c$ is defined over $k'$ and $q$ is as 
in Theorem \ref{main}, the Gysin map is an isomorphism or surjection of Galois representations.
\end{remark}
The main new tool in the proof of this result is Beilinson and Saito's works \cite{Bei16, Sai17} on the singular support of constructible sheaves in arbitrary characteristic. In Section 2, we will use their work to prove a result, Theorem \ref{finalPiece}, that is at the core of the argument; we will also collect a couple lemmas we will need. In Section 3, we will use these tools to prove Theorem \ref{main}.
\subsection{Past results}
 Theorem \ref{main} generalizes a number of results beyond Theorem \ref{BPS}. Poonen-Slavov \cite{PS20} establish the  $q=2\dim X$ case of Theorem \ref{main} under the additional assumption that $\phi$ has equidimensional fibers. If $\phi$ is the immersion of a normal projective complex variety and $c=1$, Theorem \ref{main} is Corollary 7.4.1 of \cite{GM83}, and is proven as a special case of the paper's Lefschetz hyperplane theorem for intersection homology.
 
 And if $\phi$ is the closed immersion of a smooth projective variety, this result is known; by \cite[Theorem 2.1]{Sko92}, there exists an isomorphism $H^{q-2}_c(\phi^{-1}(H), \Lambda(-1))\ra H^q_c(X, \Lambda)$ so long as $q\geq n+s+3$, where $s$ is the dimension of the singular locus of $\phi^{-1}(H)$. So the locus where there exists no isomorphism $H^{q-2}_c(\phi^{-1}(H), \Lambda(-1))\ra H^q_c(X, \Lambda)$ is contained in the locus of hyperplanes such that $\phi^{-1}(H)$ is singular in dimension at least $q-n-2$. This locus in turn has codimension at least $q-n-1$.

\subsection*{Acknowledgements}
 We would like to thank Sasha Beilinson, Robert Cass, Joe Harris, Bjorn Poonen, and Alexander Smith for helpful conversations.
 \section{The Gysin map for bounded complexes}
\subsection{Notation and the basic setup}
Let $k$ be a separably closed field, let $\ell$ be a prime power not divisible by the characteristic of $k$, and set $\Lambda=\ZZ/\ell \ZZ$. For the remainder of this paper, sheaves will be constructible \'etale sheaves of $\Lambda$-modules, and for any variety $V$ we will use $D(V)$ to denote the bounded derived category of constructible sheaves of $\Lambda$-modules on $V$. Given a $k$-point $H\in \check \PP^n$, let $i:H\ra \PP^n$ denote the corresponding inclusion, and let $j:\PP^n\setminus H\ra \PP^n$ be the inclusion of the complement. For the remainder of the paper, all functors will be derived; for instance, we will use $j_*$ to denote the derived pushforward associated to $j$. Finally, we will use the notation $\mathbb{H}^q(\mathcal{F})$ to denote the hypercohomology of $\mathcal{F}\in D(V)$ in degree $q$. 

On the level of sheaves, the map in Theorem \ref{main} is induced by applying $R\Gamma$ to a composition of two arrows in $D(\PP^n)$. Given a bounded complex of sheaves $\mathcal{F}$ on $\PP^n$, the two maps we will use are:
\begin{itemize}
    \item The counit map $i_!i^!\mathcal{F}\ra \mathcal{F}$, which appears in the localization triangle
    \[
    i_!i^!\mathcal{F}\ra \mathcal{F}\ra j_*j^*\mathcal{F}\overset{+1}{\ra}.
    \]
    \item The Gysin map $i^*\mathcal{F}(-1)[-2]\ra i^! \mathcal{F}$, as discussed in \cite{Fuj02}.
\end{itemize}

Applying $R\Gamma$ to these two maps produces maps $$\mathbb{H}^{q}(H,i^!\mathcal{F})\ra \mathbb{H}^q(\mathcal{F})$$ and $$\mathbb{H}^{q-2}(H, i^*\mathcal{F}(-1))\ra \mathbb{H}^q(H,i^!\mathcal{F})$$ respectively. Our main effort will be proving that these maps are isomorphisms or surjections assuming certain hypotheses on $\mathcal{F}$ and $H$.

\subsection{The counit map}
Of the two maps above, the counit map is the easier one to understand, as its cone is $j_*j^*\mathcal{F}$.
\begin{lemma}\label{counit}
Let $\mathcal{F}\in D(\PP^n)$. Fix an integer $r$. Suppose that for all $p$, the sheaf $\mathcal{H}^p(\mathcal{F})$ has support of dimension at most  $r-p$. Then the sheaf $\mathcal{H}^p(j_*j^*\mathcal{F})$ has support of dimension at most $r-p$ and we have $\mathbb{H}^q(\PP^n,j_*j^*\mathcal{F})=0$ for $q>r$.
\end{lemma}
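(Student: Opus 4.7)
The plan is to recast the hypothesis on $\mathcal{F}$ as a perversity condition: the condition $\dim \mathrm{supp}\,\mathcal{H}^p(\mathcal{F}) \le r - p$ for all $p$ is precisely the statement that $\mathcal{F}[r] \in {}^pD^{\le 0}(\PP^n)$ for the middle perverse $t$-structure, since ${}^pD^{\le 0}$ is characterized by $\dim \mathrm{supp}\,\mathcal{H}^i \le -i$. The two conclusions then become (i) $j_*j^*\mathcal{F}[r] \in {}^pD^{\le 0}(\PP^n)$, and (ii) vanishing of the positive-degree hypercohomology of $j^*\mathcal{F}[r]$. The key geometric input is that $j\colon U := \PP^n \setminus H \hookrightarrow \PP^n$ is both \'etale (being an open immersion) and affine (since $U \cong \mathbb{A}^n$).

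For (i) I would chain two standard $t$-exactness results from Beilinson--Bernstein--Deligne. Because $j$ is \'etale, $j^*$ is $t$-exact for the perverse $t$-structure, so $j^*\mathcal{F}[r] \in {}^pD^{\le 0}(U)$. Because $j$ is affine, Artin's theorem says $j_*$ is $t$-exact on constructible complexes, so $j_*j^*\mathcal{F}[r] \in {}^pD^{\le 0}(\PP^n)$. Unwinding the semi-perversity condition on the cohomology sheaves recovers the claimed support bound on $\mathcal{H}^p(j_*j^*\mathcal{F})$.

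For (ii) I would invoke Artin vanishing for ${}^pD^{\le 0}$-objects on affine varieties: if $\mathcal{G} \in {}^pD^{\le 0}$ on an affine variety $V$, then $\mathbb{H}^i(V,\mathcal{G}) = 0$ for $i > 0$. Applied to $\mathcal{G} = j^*\mathcal{F}[r]$ on the affine $U$, together with the adjunction $\mathbb{H}^*(\PP^n, j_*j^*\mathcal{F}) \simeq \mathbb{H}^*(U, j^*\mathcal{F})$, this yields $\mathbb{H}^q(\PP^n, j_*j^*\mathcal{F}) = 0$ for $q > r$.

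The whole argument is really bookkeeping once the translation to perverse sheaves is in place, so no step is a genuine obstacle; the main thing to verify is the dictionary between the support hypothesis and semi-perversity, after which everything reduces to citing the right $t$-exactness and vanishing theorems. Part (i) is the place where the perverse machinery is really essential: a naive spectral sequence $E_2^{a,b} = R^aj_*j^*\mathcal{H}^b(\mathcal{F})$ gives the correct support estimate for $a\in\{0,1\}$ but fails by one dimension once $a \ge 2$, whereas perverse $t$-exactness of $j_*$ for affine open immersions handles all $a$ uniformly.
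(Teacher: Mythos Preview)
Your argument is correct and is exactly the alternative the paper itself outlines in the Remark following the lemma: read the hypothesis as $\mathcal{F}[r]\in{}^pD^{\le 0}$ and invoke perverse right $t$-exactness of $j_*$ for an affine open immersion (plus Artin vanishing on the affine $U$ for the hypercohomology claim).

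The paper's written proof, however, is precisely the spectral-sequence argument you dismiss in your final paragraph. Your claim that it ``fails by one dimension once $a\ge 2$'' is mistaken: the point is that affine vanishing applies \emph{locally} as well, giving $\dim\operatorname{supp}R^{a}j_{*}\mathcal{G}\le \dim\operatorname{supp}\mathcal{G}-a$ for every $a\ge 0$ when $j$ is an affine open immersion (this is the same content as the perverse right $t$-exactness of $j_*$, read off one cohomology sheaf at a time). With this input the hypercohomology spectral sequence $E_2^{a,b}=R^{a}j_{*}\mathcal{H}^{b}(j^{*}\mathcal{F})\Rightarrow \mathcal{H}^{a+b}(j_{*}j^{*}\mathcal{F})$ yields the support bound directly. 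So the two proofs are the same mathematics in different packaging; yours is slicker, the paper's is more explicit.
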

\begin{proof}
Let $\mathcal{F}$ and $r$ satisfy the hypotheses of the lemma. For each $p$, the support of $\mathcal{H}^p(j^*\mathcal{F})\cong j^*\mathcal{H}^p(\mathcal{F})$ has dimension at most $r-p$. Since $\PP^n\setminus H$ is affine, we then have that affine vanishing \cite[Theorem VI.7.3]{Mil80} implies that $R^sj_*(\mathcal{H}^p(j^*\mathcal{F}))$ is supported in dimension at most $r-p-s$ and $H^s(\mathcal{H}^p(j^*\mathcal{F}))=0$ if $s+p>r$. The results follow by applying a spectral sequence to the filtered complex $\tau^{\geq p} \mathcal{F}$.
\end{proof}

\begin{remark}
The hypothesis of the lemma is equivalent to asking that $\mathcal{F}$ sit
in perverse degrees $\leq r$. From this perspective, the claim follows from
right exactness of affine pushforwards with respect to the perverse $t$-structure.
\end{remark}

\subsection{The Gysin map}
The following result is the most nonstandard ingredient of the proof of Theorem \ref{main}.
\begin{theorem}\label{finalPiece}
Let $\mathcal{F}$ be an object of $D(\PP^n)$, let $\Phi\subset \PP^n\times \check \PP^n$ be the universal hyperplane, and let $\pi_1$, $\pi_2$ denote the projections. Then there is a 
closed subscheme $Z\subset \Phi$ of dimension $\leq n-1$ 
such that for $H\in \check \PP^n$, the cone of the Gysin map
\[
i^*\mathcal{F}(-1)[-2]\ra i^! \mathcal{F},
\]
associated to $i:H\ra \PP^n$ is supported on $\pi_1(Z\cap \pi_2^{-1}(H))$. 
\end{theorem}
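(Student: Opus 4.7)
My plan is to apply Beilinson's theory of singular supports to reduce Theorem \ref{finalPiece} to the statement that the Gysin map is an isomorphism at non-characteristic points. Let $C := SS(\mathcal F) \subset T^*\PP^n$ denote the singular support of $\mathcal F$, which by \cite{Bei16} is a closed conical subset of $T^*\PP^n$ of dimension at most $n$. The work of Saito \cite{Sai17} provides the key input: if the conormal line to a hyperplane $H$ at a point $x \in H$ meets $C$ only in the zero section of $T^*\PP^n$, then the immersion $i : H \hookrightarrow \PP^n$ is non-characteristic for $\mathcal F$ at $x$, and consequently the Gysin map $i^*\mathcal F(-1)[-2] \to i^!\mathcal F$ is an isomorphism on stalks at $x$. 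The subscheme $Z$ will be constructed so that its fiber over $H$ records precisely the complementary, characteristic locus.

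To turn this into the desired $Z$, I would first identify $\Phi$ with the projectivized cotangent bundle $\PP(T^*\PP^n)$ by sending $(x,H) \in \Phi$ to the conormal line to $H$ at $x$: for fixed $x \in \PP^n$, hyperplanes through $x$ are in natural bijection with hyperplanes in $T_x\PP^n$ and hence with lines in $T_x^*\PP^n$. Under this identification, I set $Z \subset \Phi$ to be the image of $C$ minus its zero section, i.e. the projectivization $\PP(C)$. Since $C$ is closed and $\mathbb G_m$-invariant under fiberwise dilation, this is a closed subscheme of $\PP(T^*\PP^n)$, and the bound $\dim C \leq n$ together with the $\mathbb G_m$-quotient gives $\dim Z \leq n-1$. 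By construction $(x,H) \in Z$ precisely when the conormal line to $H$ at $x$ is a nonzero element of $C$, so $\pi_1(Z\cap \pi_2^{-1}(H))$ is exactly the characteristic locus of $\mathcal F$ inside $H$.

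Combining the two ingredients, the cone of the Gysin map has stalks vanishing outside $\pi_1(Z\cap \pi_2^{-1}(H))$, which is the content of the theorem. The main obstacle I expect is in the microlocal step of the first paragraph: one needs to check that the Gysin map constructed concretely in \cite{Fuj02} from a fundamental class agrees, at non-characteristic points, with the canonical comparison between $i^*\mathcal F$ and $i^!\mathcal F$ furnished by Beilinson and Saito's theory, up to the expected Tate twist and cohomological shift. Everything else amounts to a dimension count and an exercise in projectivizing closed conical subsets; this compatibility is the substantive input that brings singular-support methods to bear on the Gysin map of \cite{Fuj02}.
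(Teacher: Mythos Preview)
Your proposal is correct and follows essentially the same route as the paper: identify $\Phi\cong\PP(T^*\PP^n)$, take $Z=\PP(SS(\mathcal F))$ with the dimension bound from Beilinson, and invoke Saito to see the Gysin map is an isomorphism off the characteristic locus. The compatibility you flag is handled in the paper (Lemma~\ref{l:gysin-ss}, citing \cite[Prop.~7.13]{Sai17}) by expressing the Gysin map as the composite $i^*\mathcal F(-1)[-2]\to\psi_f(\mathcal F)(-1)[-2]\to i^!\mathcal F$ through nearby cycles, which is an isomorphism once $\phi_f(\mathcal F)=0$.
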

In the proof of Theorem \ref{main}, we will use the following corollary of this theorem.
\begin{corollary}\label{finalPieceCorollary}
With $\mathcal{F}$ as in Theorem \ref{finalPiece}, for any positive integer $c$ there is a closed subscheme $Z_c\subset \check \PP^n$ of dimension at most $n-c$ such that for any $H\in \check \PP^n\setminus Z_c$ the cone of the Gysin map associated to $i:H\ra \PP^n$ is supported on a subscheme of dimension at most $c-2$.
\end{corollary}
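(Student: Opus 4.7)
The plan is to deduce the corollary from Theorem \ref{finalPiece} by a fiber-dimension analysis of the projection $\pi_2|_Z : Z\ra \check\PP^n$, where $Z\subset\Phi$ is the subscheme of dimension at most $n-1$ produced by that theorem. Since Theorem \ref{finalPiece} pins the support of the cone of the Gysin map inside $\pi_1(Z\cap\pi_2^{-1}(H))$ and $\pi_1$ cannot increase dimension, it suffices to exhibit a closed subscheme $Z_c\subseteq\check\PP^n$ of dimension at most $n-c$ outside of which $\dim(Z\cap\pi_2^{-1}(H))\leq c-2$.

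Concretely, I would let
\[
W := \{z\in Z : \dim_z(Z\cap\pi_2^{-1}(\pi_2(z)))\geq c-1\},
\]
which is closed in $Z$ by Chevalley's upper semicontinuity of fiber dimension, and take $Z_c$ to be the closure of $\pi_2(W)$ in $\check\PP^n$. By construction every non-empty fiber of $\pi_2|_W : W\ra Z_c$ has dimension at least $c-1$, since $W\cap\pi_2^{-1}(H)$ is precisely the union of those components of $Z\cap\pi_2^{-1}(H)$ that have dimension at least $c-1$. Decomposing $Z_c$ into irreducible components and choosing, for each, an irreducible component of $W$ dominating it, the generic-fiber-dimension formula then gives the bound $\dim Z_c \leq \dim W - (c-1) \leq (n-1) - (c-1) = n-c$.

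For $H\in\check\PP^n\setminus Z_c$, the set-theoretic fiber $Z\cap\pi_2^{-1}(H)$ is disjoint from $W$, so $\dim_z(Z\cap\pi_2^{-1}(H))\leq c-2$ at every point $z$, hence $\dim(Z\cap\pi_2^{-1}(H))\leq c-2$, and the same bound holds for $\pi_1(Z\cap\pi_2^{-1}(H))$. Invoking Theorem \ref{finalPiece} then finishes the proof. This step is essentially routine once Theorem \ref{finalPiece} is in hand; the only mild care required is in applying the fiber-dimension formulas componentwise, since $Z$ need not be irreducible or equidimensional.
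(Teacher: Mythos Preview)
Your argument is correct and follows essentially the same route as the paper: take $Z$ from Theorem \ref{finalPiece}, let $Z_c$ be the locus in $\check\PP^n$ over which the fibers of $Z\to\check\PP^n$ have dimension $\geq c-1$, and use $\dim Z\leq n-1$ together with the fiber-dimension inequality to get $\dim Z_c\leq n-c$. You have simply unpacked the semicontinuity and dimension-count steps that the paper leaves implicit; note also that since $\pi_2|_Z$ is proper (being a restriction of $\Phi\to\check\PP^n$), the image $\pi_2(W)$ is already closed, so the closure is unnecessary.
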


\begin{proof}
Take $Z$ as in Theorem \ref{finalPiece}.  
Let $Z_c \subset \check{\PP}^n$ be the closed subscheme where the fibers
of the projection $Z \to \check{\PP}^n$ have dimension $\geq c-1$. 
As $\dim Z = n-1$, we have $\dim Z_c \leq n-c$, and it satisfies the conclusion
by definition of $Z$. 
\end{proof}

The theory of \emph{singular support} for \'etale sheaves was developed by Beilinson \cite{Bei16} and Saito \cite{Sai17}. We use it in the following form.
\begin{theorem}[Theorem 1.3, \cite{Bei16}]\label{beilinson}
Let $X$ be a smooth variety of dimension $n$ and let
$\mathcal{F}$ be a bounded constructible complex on it. 

Then there exists a closed, conical subscheme $SS(\mathcal{F}) \subset T^*X$ 
of dimension $n$ such that for every pair
$h:U\ra X$ smooth and $f:U\ra Y$ a morphism to a smooth variety $Y$ such that $df(T^*Y)\cap dh(SS(\mathcal{F}))$ is contained in the zero section 
$U \subset T^*U$, the map $f$ is locally acyclic with respect to $h^*(\mathcal{F})$ (in the sense of \cite{Del77}, 2.12). 
\end{theorem}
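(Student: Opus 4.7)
The plan is to apply Beilinson's singular support theorem to $\mathcal{F}$, take $Z$ to be the projectivization of $SS(\mathcal{F})$ under the canonical identification $\Phi\cong \PP(T^*\PP^n)$, and then deduce the Gysin isomorphism at non-characteristic points from the standard link between non-characteristicness and local acyclicity. First, Theorem \ref{beilinson} applied to $\PP^n$ and $\mathcal{F}$ yields a closed conical subscheme $C:=SS(\mathcal{F})\subset T^*\PP^n$ of dimension $\leq n$. Second, there is a canonical isomorphism of $\PP^{n-1}$-bundles over $\PP^n$, $\Phi\xrightarrow{\sim}\PP(T^*\PP^n)$, sending a point $(x,H)\in\Phi$ to the conormal line $N^*_{H/\PP^n}|_x\subset T^*_x\PP^n$; under this identification $\pi_1$ corresponds to the bundle projection. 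I would then define $Z\subset \Phi=\PP(T^*\PP^n)$ to be the projectivization of $C$, i.e.\ the reduced image of $C$ minus its zero section. Since $C$ is closed, conical, and of dimension $\leq n$, the subscheme $Z$ is closed of dimension $\leq n-1$.

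Unwinding the identifications, for $H\in\check\PP^n$ and $x\in H$ the point $(x,H)$ lies in $Z\cap \pi_2^{-1}(H)$ exactly when the conormal line $N^*_{H/\PP^n}|_x$ meets $C$ outside the zero section, i.e.\ when $i:H\hookrightarrow \PP^n$ is \emph{characteristic} for $\mathcal{F}$ at $x$. It therefore suffices to show that the cone of the Gysin map vanishes in a neighborhood of any non-characteristic $x\in H$. For such an $x$, I would choose an affine open $U\subset \PP^n$ containing $x$ together with a regular function $f:U\to\mathbb{A}^1$ cutting out $H\cap U$ (for instance a linear coordinate). Since $\pi_1(Z\cap \pi_2^{-1}(H))$ is closed in $H$, after shrinking $U$ every point of $H\cap U$ is non-characteristic for $\mathcal{F}$. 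Because $df(T^*\mathbb{A}^1)$ above $u\in U$ is the span of $df_u$, which at points of $H\cap U$ is exactly the conormal line to $H$, a further shrinking of $U$ (using closedness of $C$) guarantees $df(T^*\mathbb{A}^1)\cap SS(\mathcal{F}|_U)\subset \{\text{zero section of }T^*U\}$ on all of $U$. Theorem \ref{beilinson} with $h=\mathrm{id}_U$ then gives local acyclicity of $f$ with respect to $\mathcal{F}|_U$.

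The step I expect to be the main obstacle is translating this local acyclicity into the Gysin map's being an isomorphism on $H\cap U$. This is a standard consequence of the theory of nearby and vanishing cycles: local acyclicity of $f$ with respect to $\mathcal{F}|_U$ forces the vanishing cycles complex $\phi_f(\mathcal{F}|_U)$ to be zero, and that vanishing is equivalent to the purity isomorphism $i^!\mathcal{F}\simeq i^*\mathcal{F}(-1)[-2]$ holding in a neighborhood of $f^{-1}(0)$; this equivalence is implicit in the setup of \cite{Fuj02} and goes back to the nearby/vanishing cycles formalism of SGA 7. Granting it, the cone of the Gysin map vanishes at every $x\in H\setminus \pi_1(Z\cap \pi_2^{-1}(H))$, which is exactly the required support statement.
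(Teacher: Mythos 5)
Your proposal does not prove the statement it is supposed to prove. The statement in question is Theorem \ref{beilinson} itself: Beilinson's existence theorem for the singular support, i.e.\ that for any bounded constructible complex $\mathcal{F}$ on a smooth $n$-dimensional variety there is a closed conical subscheme $SS(\mathcal{F}) \subset T^*X$ of dimension $n$ controlling local acyclicity for all test pairs $(h,f)$. Your argument opens by \emph{invoking} this theorem (``Theorem \ref{beilinson} applied to $\PP^n$ and $\mathcal{F}$ yields a closed conical subscheme $C = SS(\mathcal{F})$ of dimension $\leq n$'') and then uses it to deduce a statement about the cone of the Gysin map. As a proof of Theorem \ref{beilinson} this is circular; what you have actually written is a proof of Theorem \ref{finalPiece} (together with the content of Lemma \ref{l:gysin-ss}), which is a different, downstream statement.

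For what it is worth, the paper does not prove Theorem \ref{beilinson} either --- it is quoted from \cite{Bei16}, and its proof (existence via a Radon-transform argument, and especially the dimension bound $\dim SS(\mathcal{F}) = n$, which the paper singles out as ``the most serious part'') requires machinery entirely absent from both the paper and your proposal. Separately, your argument for \ref{finalPiece} does track the paper's: projectivize $SS(\mathcal{F})$ inside $\Phi \cong \PP T^*\PP^n$ to get $Z$ of dimension $\leq n-1$, and at non-characteristic points of $H$ pass through local acyclicity of a local equation $f$ for $H$, vanishing of $\phi_f(\mathcal{F})$, and the nearby/vanishing-cycles triangles to conclude the Gysin map is an isomorphism there. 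That portion is sound and matches Lemma \ref{l:gysin-ss} and the proof of Theorem \ref{finalPiece} in the paper. But it leaves the actual target statement --- the existence and dimension of $SS(\mathcal{F})$ --- entirely unaddressed.
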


\begin{remark}
The most serious part of the theorem is the calculation of the dimension 
of $SS(\mathcal{F})$.
\end{remark}

\begin{lemma}\label{l:gysin-ss}

Suppose $X$ is smooth and let $i:D \to X$ be the embedding of a smooth divisor. 
Suppose $\mathcal{F} \in D(X)$ has the property that the intersection 
$$SS(\mathcal{F})|_D \cap N_{X/D}^* \subset T^*X \underset{X}{\times} D$$
is contained in the zero section. 

Then the Gysin map
$$i^*\mathcal{F}(-1)[-2]\ra i^! \mathcal{F}$$
is an isomorphism.

\end{lemma}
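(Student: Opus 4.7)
The plan is, first, to reduce to the local setting where $D$ is cut out by a single smooth function $f: X \to \mathbb{A}^1$. This reduction is legitimate because the Gysin map and the singular support hypothesis are both \'etale-local on $X$, and locally any smooth divisor in a smooth variety admits such a defining function. Over $D$, the image $df(T^*\mathbb{A}^1)$ is the line bundle in $T^*X|_D$ spanned by $df$, which is exactly the conormal bundle $N^*_{X/D}$. So the singular support hypothesis says that $df(T^*\mathbb{A}^1) \cap SS(\mathcal{F})$ lies in the zero section above $D$; since $SS(\mathcal{F})$ is closed in $T^*X$, after shrinking $X$ to an open neighborhood of $D$ I may assume this holds on all of $X$.

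Second, I apply Theorem \ref{beilinson} with $h = \mathrm{id}_X$ and the chosen $f$: the hypothesis of the theorem is precisely what the previous step arranged, so its conclusion is that $f$ is locally acyclic with respect to $\mathcal{F}$.

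Third, I deduce the Gysin isomorphism from local acyclicity. Let $j: X \setminus D \to X$. Applying $i^*$ to the localization triangle yields
\[ i^!\mathcal{F} \to i^*\mathcal{F} \to i^* Rj_*j^*\mathcal{F} \overset{+1}{\to}. \]
At a geometric point $\bar x \in D$, local acyclicity implies that $(Rf_*\mathcal{F})|_{\mathbb{A}^1_{(0)} \setminus \{0\}}$ is a constant sheaf with stalk $\mathcal{F}_{\bar x}$ and trivial monodromy (the monodromy is trivial because the specialization map $\mathcal{F}_{\bar x} \to R\Gamma(\mathrm{Milnor\ fiber}, \mathcal{F})$ identifying the two is automatically Galois equivariant, with the source having trivial action). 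It follows that $(i^*Rj_*j^*\mathcal{F})_{\bar x} \cong \mathcal{F}_{\bar{x}} \otimes R\Gamma(\mathbb{A}^1_{(0)} \setminus \{0\}, \Lambda) \cong \mathcal{F}_{\bar{x}} \oplus \mathcal{F}_{\bar{x}}(-1)[-1]$, with the map from $i^*\mathcal{F}$ identified as the inclusion of the first summand. The fiber is then $i^*\mathcal{F}(-1)[-2]$, which one identifies with $i^!\mathcal{F}$ via the Gysin map.

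The principal obstacle is the last step: matching this identification with the Gysin map of \cite{Fuj02}. Both constructions reduce, in the constant-sheaf case, to the classical purity isomorphism $i_0^!\Lambda_{\mathbb{A}^1} \cong \Lambda(-1)[-2]$ for the point $\{0\} \subset \mathbb{A}^1$, so the compatibility is natural in spirit, but it requires care to track through the various adjunctions and specialization maps explicitly. An alternative packaging is via vanishing cycles: local acyclicity is equivalent to $\Phi_f(\mathcal{F}) = 0$, and the Gysin iso can be read off from the nearby/vanishing cycles triangle, but either route ultimately demands the same compatibility check.
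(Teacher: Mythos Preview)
Your first two steps---the local reduction to a defining function $f: X \to \mathbb{A}^1$ and the deduction of local acyclicity from Theorem~\ref{beilinson}---match the paper exactly. The divergence is in step three. You compute $i^*Rj_*j^*\mathcal{F}$ stalkwise via the Milnor fiber, while the paper takes precisely the vanishing-cycles route you relegate to a closing remark: it invokes the two standard triangles
\[
i^*\mathcal{F} \to \psi_f\mathcal{F} \overset{\operatorname{can}}{\to} \phi_f\mathcal{F} \overset{+1}{\to},
\qquad
\phi_f\mathcal{F} \overset{\operatorname{var}}{\to} \psi_f\mathcal{F}(-1) \to i^!\mathcal{F}[2] \overset{+1}{\to},
\]
together with the fact that the composite $i^*\mathcal{F}(-1)[-2] \to \psi_f\mathcal{F}(-1)[-2] \to i^!\mathcal{F}$ \emph{is} the Gysin map; since $\phi_f\mathcal{F}=0$, both arrows are isomorphisms and the lemma follows. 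You are somewhat too pessimistic in saying this route ``ultimately demands the same compatibility check'': the identification of the Gysin map with the composite through nearby cycles is part of the standard formalism and can simply be cited, whereas your stalkwise argument genuinely leaves that compatibility as a loose end you flag but do not resolve. Your approach has the virtue of being concrete and self-contained once that end is tied, but the paper's packaging is cleaner for exactly the reason you single out as the principal obstacle.
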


\begin{proof}

This result is a special case of \cite{Sai17} Proposition 7.13. We outline
a direct proof below. 

The assertion is local on $X$, so we may assume there is a map
$f:X \to \mathbb{A}^1$ with $D = f^{-1}(0)$. 

By assumption, the image of $\mathbb{P} (SS(\mathcal{F}) \cap df(X)) \to X$
avoids $D$, so we may assume this intersection is zero.
Then taking
$U = X$ and $Y = \mathbb{A}^1$ in the definition of
singular support, we see that $\mathcal{F}$ is locally acyclic
with respect to $f$ by hypothesis, so its vanishing cycles 
$\phi_f(\mathcal{F})$ are $0$.

In general, recall that we have exact triangles
$$i^*(\mathcal{F}) \to \psi_f(\mathcal{F}) \overset{\operatorname{can}}{\to} 
\phi_f(\mathcal{F}) \overset{+1}{\to} $$
$$\phi_f(\mathcal{F}) \overset{\operatorname{var}}{\to} 
\psi_f(\mathcal{F})(-1) \to i^!(\mathcal{F})[2]  \overset{+1}{\to}$$
such that the composition 
$$i^*(\mathcal{F})(-1)[-2] \to \psi_f(\mathcal{F})(-1)[-2] \to i^!(\mathcal{F})$$
is the Gysin map. As $\phi_f(\mathcal{F}) = 0$ by the above, we obtain our
claim.

\end{proof}

\begin{proof}[Proof of Theorem \ref{finalPiece}]
Recall that $\Phi$ is canonically isomorphic to $\PP T^*\PP^n$. Let
$Z=\PP SS(\mathcal{F})$ as in Theorem \ref{beilinson}. By the theorem,
it has dimension $n-1$.

Fix $H\in \check \PP^n$, and let $B\subset H$ be the subscheme 
$\pi_1(Z\cap \pi_2^{-1}(H))$. By construction, 
the closed embedding $H \setminus B \to \PP^n \setminus B$ satisfies 
the hypotheses of Lemma \ref{l:gysin-ss}, so the cone of the Gysin map
is supported on $B$ as desired.

\end{proof}

\section{Proof of Theorem \ref{main}}
In this section, we continue to use notation from the previous section.

Let $\phi:X\ra \PP^n$ be a morphism of separated schemes of finite type over $k$. In this section we establish Theorem \ref{main} for the map $\phi$. Let $\Lambda_X$ denote the constant sheaf on $X$ with fiber $\Lambda$.

We first note that $R^p\phi_!\Lambda_X$ is supported at points over which $\phi$ has fiber dimension at least $\frac{p}{2}$, because the stalk of $R^p\phi_!\Lambda_X$ at a point $x$ is just $H^p_c(\phi^{-1}(X),\Lambda)$ by \cite[Lemma 0F7L]{Stacks}.
As a consequence, if $X\times_{\PP^n} X$ has dimension $r$, we have that $\mathcal{H}^p(\phi_!\Lambda_{X})$ has support in dimension at most $r-p$. So by Lemma \ref{counit} we have that for any inclusion of a hyperplane $i:H\ra \PP^n$, the hypercohomology groups $\mathbb{H}^q(\PP^n,j_*j^*\phi_!\Lambda_X)$ vanish for $q>r$. So we have that the counit map $\mathbb{H}^q(H, i^! \phi_!\Lambda_{X})\ra H_c^q(X, \Lambda)$ is an isomorphism if $q\geq r+2$ and is a surjection if $q=r+1$. 

We now show that the Gysin map induces an isomorphism or surjection on cohomology. Fixing $c\geq 1$, let $Z_c$ be the exceptional set in Corollary \ref{finalPieceCorollary} relative to the complex of sheaves $\phi_! \Lambda_X$. Fix some $k$-point $H\in \check \PP^n\setminus Z_c$, and let $Q\in D(H)$ denote the cone of the morphism
\[
i^*\phi_!\Lambda_{X}(-1)[-2]\ra i^!\phi_!\Lambda_{X}.
\]
By Corollary \ref{finalPieceCorollary}, $Q$ is supported on a closed subscheme $B$ of dimension at most $c-2$. Moreover, $\mathcal{H}^p(j_*j^*\phi_!\Lambda_{X})$ has support of dimension at most $\min(r-p,n)$ by Lemma \ref{counit}. Since we also have that $R^p\phi_!\Lambda_{X}$ is supported in dimension $\min(r-p, n)$, the distinguished triangle $i_!i^! \phi_!\Lambda_{X}\ra \phi_! \Lambda_X\ra j_*j^*\phi_! \Lambda_{X}\ra $ gives the bound on the dimension of the support of $\mathcal{H}^p(i_!i^! \phi_!\Lambda_{X})$,
\[
\mathrm{dim\ supp}(\mathcal{H}^p(i_!i^! \phi_!\Lambda_{X}))\leq \min(r-p+1,n).
\]

Likewise, $\mathcal{H}^p(i^*\phi_*\Lambda_{X}(-1)[-2])$ is supported on a set of dimension at most $\min(r-p+2,n-1)$. From these two observations and the defining triangle for $Q$, we see that $\mathcal{H}^p(Q)$ is supported on a subscheme of dimension at most $\min(r-p+1, c-2)$. 
Therefore, $\mathbb{H}^{q-p}(H,\mathcal{H}^p(Q)) = 0$ for $q-p > 2\min(r-p+1,c-2)$.
Observe that if $q-p \leq 2\min(r-p+1,c-2)$, then $2(q-p) \leq 2(r-p+1)+2(c-2)$, 
so $q \leq r+c-1$. Therefore, whenever $q>r+c-1$, $\mathbb{H}^{q-p}(H,\mathcal{H}^p(Q)) = 0$.
Applying the Grothendieck spectral sequence, we find that $\mathbb{H}^q(H,Q) = 0$ for $q>r+c-1$. 

From this, we conclude that the Gysin map
\[
H^{q-2}_c(\phi^{-1}(H),\Lambda(-1))\cong \mathbb{H}^{q-2}(H, i^*\phi_!\Lambda_{X}(-1))\ra \mathbb{H}^q(H,i^!\phi_!\Lambda_{X})
\]
is an isomorphism for $q>r+c$ and a surjection if $q=r+c$. Combining this with the counit map above, and noting $c\geq 1$, we have that the map on cohomology
\[
H^{q-2}_c(\phi^{-1}(H),\Lambda(-1))\ra H_c^q(X, \Lambda)
\]
is an isomorphism for $q>r+c$ and a surjection if $q=r+c$, proving Theorem \ref{main} for $X$.

\begin{remark}

In the above setting, note that $\phi_!(\Lambda_X) \in D(\PP^n)$
is in perverse degrees $\leq r$. 
 The argument shows that, suitably understood, the 
conclusion of Theorem \ref{main} holds for
any $\mathcal{F} \in D(\PP^n)$ a bounded complex of constructible sheaves
in perverse degrees $\leq r$.

\end{remark}
\bibliographystyle{plain}
\bibliography{slice}

\begin{thebibliography}{10}

\bibitem{Bei16}
Alexander Beilinson.
\newblock Constructible sheaves are holonomic.
\newblock {\em Selecta Mathematica}, 22(4):1797--1819, 2016.

\bibitem{Ben11}
Olivier Benoist.
\newblock Le th{\'e}or{\`e}me de {B}ertini en famille.
\newblock {\em Bulletin de la Soci{\'e}t{\'e} Math{\'e}matique de France},
  139(4):555--569, 2011.

\bibitem{Del77}
Pierre Deligne.
\newblock Th\'eor\`emes de finitude en cohomologie $\ell$-adique.
\newblock In {\em Cohomologie \'etale: S\'eminaire de G\'eom\'etrie
  Alg\'ebrique du {B}ois-{M}arie {SGA} 4 1/2}, pages 233--251. Springer, 1977.

\bibitem{Fuj02}
Kazuhiro Fujiwara.
\newblock A proof of the absolute purity conjecture (after {G}abber).
\newblock In {\em Algebraic Geometry 2000, Azumino}, pages 153--183, Tokyo,
  Japan, 2002. Mathematical Society of Japan.

\bibitem{GM83}
Mark Goresky and R~MacPherson.
\newblock Intersection homology {II}.
\newblock {\em Inventiones Mathematicae}, 72:77, 1983.

\bibitem{Mil80}
James~S. Milne.
\newblock {\em \'Etale Cohomology}.
\newblock Princeton University Press, 1980.

\bibitem{PS20}
Bjorn Poonen and Kaloyan Slavov.
\newblock The exceptional locus in the {B}ertini irreducibility theorem for a
  morphism.
\newblock {\em arXiv preprint arXiv:2001.08672}, 2020.

\bibitem{Sai17}
Takeshi Saito.
\newblock The characteristic cycle and the singular support of a constructible
  sheaf.
\newblock {\em Inventiones Mathematicae}, 207:597--695, 2017.

\bibitem{Sko92}
Alexei~N Skorobogatov.
\newblock Exponential sums, the geometry of hyperplane sections, and some
  {D}iophantine problems.
\newblock {\em Israel Journal of Mathematics}, 80(3):359--379, 1992.

\bibitem{Stacks}
The {Stacks Project Authors}.
\newblock \textit{Stacks Project}.
\newblock \url{https://stacks.math.columbia.edu}, 2020.

\end{thebibliography}
\end{document}